\numberwithin{equation}{section} % numbers equations by sections
\newcommand{\lleft}{\left}
\newcommand{\rright}{\right}
\newtheorem{theorem}{Theorem}[section]
\newtheorem{corol}{Corollary}[section]
\theoremstyle{definition}
\newtheorem{example}{Example}[section]
\newtheorem{remark}{Remark}[section]
\newtheorem{defin}{Definition}[section]
\begin{document}

\begin{frontmatter}
\pretitle{Research Article}

\title{Multi-condition of stability for nonlinear stochastic
non-autonomous delay differential equation}

\author{\inits{L.}\fnms{Leonid}~\snm{Shaikhet}\ead
[label=e1]{leonid.shaikhet@usa.net}}
%\author[]{\inits{}\fnms{}~\snm{}\thanksref{f1}\thanksref{cor1}
%\ead[label=e2]{}}
%\thankstext[type=corresp,id=cor1]{Corresponding author.}
\address{School of Electrical Engineering, \institution{Tel-Aviv
University}, Tel-Aviv 69978, \cny{Israel}}
%\address[]{\institution{}, ..., \cny{}}

%\thankstext[id=f1]{}

%\dedicated{}

%\markboth{Authors}{Title}
\markboth{L. Shaikhet}{Multi-condition of stability for nonlinear
stochastic non-autonomous delay differential equation}

\begin{abstract}
A nonlinear stochastic differential equation with the order of nonlinearity
higher than one, with several discrete and distributed delays and time
varying coefficients is considered. It is shown that the sufficient
conditions for exponential mean square stability of the linear part of the
considered nonlinear equation %at the same time
also are sufficient conditions
for stability in probability of the initial nonlinear equation. Some new
sufficient condition of stability in probability for the zero solution of
the considered nonlinear non-autonomous stochastic differential
equation is
obtained which can be considered as a multi-condition of stability because
it allows to get for one considered equation at once several different
complementary of each other sufficient stability conditions. The obtained
results are illustrated with numerical simulations and figures.
\end{abstract}
\begin{keywords}
\kwd{Nonlinear stochastic differential equation}
\kwd{order of nonlinearity higher than one}
\kwd{varying coefficients}
\kwd{discrete and distributed delays}
\kwd{exponential mean square stability}
\kwd{stability in probability}
\kwd{regions of stability}
\end{keywords}
\begin{keywords}[MSC2010]%
\kwd{34G20}
\kwd{34K20}
\kwd{34K50}
\kwd{60G55}
\end{keywords}

\received{\sday{11} \smonth{6} \syear{2018}}% Updated by
%VTEXPTS2LaTeX.exe, 07.08.2018 11:34
\revised{\sday{27} \smonth{7} \syear{2018}}% Updated by
%VTEXPTS2LaTeX.exe, 07.08.2018 11:34
\accepted{\sday{27} \smonth{7} \syear{2018}}% Updated by
%VTEXPTS2LaTeX.exe, 07.08.2018 11:34
\publishedonline{\sday{20} \smonth{8} \syear{2018}}
\end{frontmatter}

\section{Introduction}

Stability problems for non-autonomous systems are very popular in
theoretical researches and applications and are %hard
difficult enough even in the
deterministic case (see, for instance, \cite
{Ba,BeGy,BeBr,Go,IdKi,Ku,La,PhHa,PhHi,PhNi,PhVi,SoTa}). In this paper
via the general method of the Lyapunov functionals construction \cite
{Sh10,Sh11,Sh13} some new multi-condition of stability in probability
is obtained for the zero solution of a nonlinear stochastic
differential equation with the order of nonlinearity higher than one,
with several discrete and distributed delays and time varying
coefficients. It is shown that the obtained multi-condition of
stability gives for one considered equation at once a set of different
complementary of each other sufficient stability conditions. Note that
other approaches to analyzing stability in random systems are presented
for example in \cite{Cort,Soon}.

Consider the scalar nonlinear stochastic differential equation with
discrete and distributed delays and time varying coefficients
\begin{align}\label{1.1}
&dx(t)+ \Biggl(\sum^n_{k=0}a_k(t)x(t-h_k)+\sum^n_{k=1}\int^t_{t-h_k}b_k(s)x(s)ds+g(t,x_t)\Biggr)dt\nonumber\\
&\quad +\sigma(t)x(t-\tau)dw(t)=0,\nonumber\\
&x(s)=\phi(s)\in H_2, \qquad s\in[-h,0],\qquad h=\max[h_1,\ldots,h_n,\tau ],\\
&|g(t,\varphi)|\le\int_{-h}^0|
\varphi(s)|^\alpha dG(s),\quad \alpha>1, \quad G=\int_{-h}^0dG(s)<\infty.
\nonumber
\end{align}

Here $a_k(t)$, $b_k(t)$, $\sigma(t)$ are bounded functions, $w(t)$ is
the standard Wiener process on a probability space $\{\varOmega, \mathfrak
F, \mathbf P\}$ \cite{GiSk,Sh13}, $H_2$ is a space of $\mathfrak
F_0$-adapted stochastic processes $\phi(s)$, $s\in[-h,0]$,
\begin{equation*}
\begin{gathered}
\|\phi\|_0=\sup_{s\in[-h,0]}| \phi(s)|, \qquad \|\phi\|^2=\sup_{s\in[-h,0]}\mathbf E|\phi(s)|^2,
\end{gathered} %
\end{equation*}
$\mathbf E$ is the mathematical expectation, $h_0=0$, $h_k>0$,
$k=1,\ldots,n$, $\tau\ge0$, $G(t)$ is a nondecreasing function of bounded
variation, the integral with respect to $dG(s)$ is understood in the
Stiltjes sense.

\begin{defin} The zero solution of \xch{Equation}{the equation} \eqref{1.1} is called:
\begin{itemize}
\item[--] mean square stable if for each $\varepsilon>0$ there exists a
$\delta>0$ such that $\mathbf E|x(t,\phi)|^2<\varepsilon$, $t\ge0$,
provided that $\|\phi\|^2<\delta$;
\item[--] asymptotically mean square stable if it is mean square stable and
\begin{equation*}
\lim\limits
_{t\to\infty}\mathbf E|x(t,\phi)|^2=0
\end{equation*}
for each initial function $\phi$;
\item[--] exponentially mean square stable if it is mean square stable
and there exists $\lambda>0$ such that for each initial function $\phi$
there exists $C>0$ (which may depend on $\phi$) such that $\mathbf
E|x(t,\phi)|^2\le Ce^{-\lambda t}$ for $t>0$;
\item[--] stable in probability if for any $\varepsilon_1>0$ and
$\varepsilon_2>0$ there exists $\delta>0$ such\break that the solution
$x(t,\phi)$ of \xch{Equation}{the equation} \eqref{1.1} satisfies the condition\break
$\mathbf P\{\sup_{t\ge0}|x(t,\phi)|>\varepsilon_1\}<\varepsilon_2$ for
any initial function $\phi$ such that $\mathbf P\{\|\phi\|_0<\delta\}=1$.
\end{itemize}
\end{defin}

Consider the stochastic differential equation \cite{GiSk}
\begin{equation}
\label{1.2} dx(t)=a_1(t,x_t)dt+a_2(t,x_t)dw(t),
\end{equation}
where $x(t)\in\mathbf R^n$, $x_t=x(t+s)$, $s\le0$, $a_1(t,\varphi)\in
\mathbf R^n$, $a_2(t,\varphi)\in\mathbf R^{n\times m}$, $w(t)\in\mathbf
R^m$, %and
along with some functional $V(t,\varphi):[0,\infty)\times H_2\to\mathbf
R_+$ that can be presented in the form $V(t,\varphi)=V(t,\varphi
(0),\varphi(s))$, $s<0$, and for $\varphi=x_t$ put
\begin{equation}
\label{1.3} %
\begin{gathered} V_\varphi(t,x)=V(t,
\varphi)=V(t,x_t)=V\bigl(t,x,x(t+s)\bigr),
\\
x=\varphi(0)=x(t), \quad s<0. \end{gathered} %
\end{equation}

Denote by $D$ the set of %the
functionals, for which the function
$V_\varphi(t,x)$ defined in \eqref{1.3} has a continuous derivative
with respect to $t$ and %two
second continuous derivative with respect to $x$.
For functionals from $D$ the generator $L$ of \xch{Equation}{the equation} \eqref{1.2}
has the form \cite{GiSk,Sh13}
\begin{equation}
\label{1.4} %
\begin{aligned}
LV(t,x_t)&=\frac{\partial V_\varphi(t,x(t))}{\partial t}+
\nabla V_\varphi'\bigl(t,x(t)\bigr)a_1(t,x_t)
\\
&\quad +\frac{1}{2}Tr\bigl[a'_2(t,x_t)
\nabla^2V_\varphi\bigl(t,x(t)\bigr)a_2(t,x_t)
\bigr]. \end{aligned} %
\end{equation}

If in \xch{Equation}{the equation} \eqref{1.2} $a_1(t,0)\equiv0$, $a_2(t,0)\equiv0$
then \xch{Equation}{the equation} \eqref{1.2} has the zero solution and the following
theorems hold.
\begin{theorem} \label{T1.1} Let there exist a functional $V(t,\varphi
)\in D$, positive constants $c_1$, $c_2$ and the function $\mu(t)$ such
that the following conditions hold: $\mu(t)\ge c_1$ for $t\ge0$, $\lim_{t\to\infty}\mu(t)=\infty$ and
\begin{equation}
\label{1.5} \mathbf EV(t,x_t)\ge\mu(t)\mathbf E|x(t)|^2,\qquad
\mathbf EV(0,\phi )\le c_2\|\phi\|^2, \qquad
\mathbf ELV(t,x_t)\le0.
\end{equation}
Then the zero solution of \xch{Equation}{the equation} \eqref{1.2} is asymptotically
mean square stable. If, in particular, $\mu(t)=c_1e^{\lambda t}$,
$\lambda>0$, then the zero solution of \xch{Equation}{the equation} \eqref{1.2} is
exponentially mean square stable.
\end{theorem}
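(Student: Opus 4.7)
The plan is to run the classical Lyapunov--Dynkin argument: integrate $LV$ along trajectories, bound $\mathbf{E}V(t,x_t)$ from above by the initial data, pinch $\mathbf{E}|x(t)|^2$ between the two bounds in \eqref{1.5}, and then read off the three stability statements from the three hypotheses on $\mu(t)$.

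First I would invoke Dynkin's formula for functionals in $D$ applied to \eqref{1.2}: since $V_\varphi(t,x)$ has a continuous time derivative and a continuous second space derivative, It\^o's formula yields
\begin{equation*}
V(t,x_t)=V(0,\phi)+\int_0^t LV(s,x_s)\,ds+\int_0^t \nabla V_\varphi'\bigl(s,x(s)\bigr)a_2(s,x_s)\,dw(s).
\end{equation*}
Taking expectations and using $\mathbf{E}LV(t,x_t)\le 0$ from \eqref{1.5}, the stochastic integral vanishes (after a standard localization by stopping times $\tau_N=\inf\{t:|x(t)|\ge N\}$ and passage to the limit using monotone/dominated convergence, which is the only place one has to be careful), and we obtain
\begin{equation*}
\mathbf{E}V(t,x_t)\le \mathbf{E}V(0,\phi)\le c_2\|\phi\|^2,\qquad t\ge 0.
\end{equation*}

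Combining this with the lower bound $\mathbf{E}V(t,x_t)\ge \mu(t)\mathbf{E}|x(t)|^2$ from \eqref{1.5} gives
\begin{equation*}
\mathbf{E}\bigl|x(t)\bigr|^2\le \frac{c_2}{\mu(t)}\|\phi\|^2.
\end{equation*}
Since $\mu(t)\ge c_1>0$, this already yields $\mathbf{E}|x(t)|^2\le (c_2/c_1)\|\phi\|^2$, so for any $\varepsilon>0$ the choice $\delta=\varepsilon c_1/c_2$ gives mean square stability. The hypothesis $\mu(t)\to\infty$ then forces $\mathbf{E}|x(t)|^2\to 0$ as $t\to\infty$, i.e.\ asymptotic mean square stability. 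In the special case $\mu(t)=c_1 e^{\lambda t}$ the same inequality becomes $\mathbf{E}|x(t)|^2\le (c_2/c_1)\|\phi\|^2 e^{-\lambda t}$, which is exponential mean square stability with $C=(c_2/c_1)\|\phi\|^2$.

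The only genuinely delicate step is the localization used to kill the expectation of the stochastic integral, because $\nabla V_\varphi'(s,x(s))a_2(s,x_s)$ need not be globally square-integrable a priori; I would handle this by stopping at $\tau_N$, noting that on $[0,t\wedge\tau_N]$ the process is bounded so the stopped stochastic integral has zero mean, applying the monotonicity $\mathbf{E}LV\le 0$ on the stopped interval, and then letting $N\to\infty$ using Fatou on the left and monotone convergence on the right. Once this justification is in place the rest of the argument is three lines of algebra driven entirely by the three hypotheses in \eqref{1.5}.
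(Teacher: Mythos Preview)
Your argument is correct and follows essentially the same route as the paper's proof: integrate the inequality $\mathbf{E}LV(t,x_t)\le 0$ to get $\mathbf{E}V(t,x_t)\le \mathbf{E}V(0,\phi)$, then sandwich $\mathbf{E}|x(t)|^2$ between the two bounds in \eqref{1.5} and read off mean square, asymptotic, and exponential stability from the respective hypotheses on $\mu(t)$. The only difference is that you supply the localization-by-stopping-times justification for killing the stochastic integral, which the paper's (much terser) proof simply suppresses under the phrase ``Integrating the last inequality in \eqref{1.5}''.
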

\begin{proof} Integrating the last inequality in \eqref{1.5}, we obtain
$\mathbf EV(t,x_t)\le\mathbf EV(0,\phi)$. So,
\begin{equation*}
c_1\mathbf E|x(t)|^2\le\mu(t)\mathbf
E|x(t)|^2\le\mathbf EV(t,x_t)\le \mathbf EV(0,\phi)\le
c_2\|\phi\|^2.
\end{equation*}
It means that the zero solution of \eqref{1.2} is mean square stable.
Besides, from the inequality $\mathbf E|x(t)|^2\le\mu^{-1}(t)\mathbf
EV(0,\phi)$ it follows that the zero solution of \eqref{1.2} is
asymptotically mean square stable or exponentially mean square stable
if $\mu(t)=c_1e^{\lambda t}$. The proof is completed.
\end{proof}
\begin{theorem} \label{T1.2} \cite{Sh13} Let there exist a functional
$V(t,\varphi)\in D$ such that for any solution $x(t)$ of \xch{Equation}{the equation}
\eqref{1.2} the following inequalities hold:
\begin{equation}
\label{1.6} %
\begin{gathered} V(t,x_t)\ge
\;c_1|x(t)|^2,\qquad V(0,\phi)\le\;c_2\|\phi
\|_0^2,
\\
LV(t,x_t)\le\;0,\quad c_1,c_2>0,
\end{gathered} %
\end{equation}
for any initial function $\phi$ such that $\mathbf P(\|\phi\|_0\le\delta
)=1$, where $\delta>0$ is small enough. Then the zero solution of
\xch{Equation}{the equation} \eqref{1.2} is stable in probability.
\end{theorem}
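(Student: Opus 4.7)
The plan is to reduce stability in probability of the zero solution to a bound on a first-exit probability, and then control that probability via the supermartingale structure forced by $LV \le 0$. Fix $\varepsilon_1 > 0$ and $\varepsilon_2 > 0$, and introduce the stopping time
\[
\tau = \inf\{t \ge 0 : |x(t,\phi)| \ge \varepsilon_1\}.
\]
Since sample paths are continuous, $\{\sup_{t\ge 0}|x(t,\phi)| > \varepsilon_1\} \subseteq \{\tau < \infty\}$, so it suffices to show $\mathbf{P}(\tau < \infty) < \varepsilon_2$ for $\phi$ concentrated in a small enough ball. I would take $\varepsilon_1$ no larger than the threshold $\delta$ built into the hypothesis (which is harmless since stability in probability only needs the condition for arbitrarily small $\varepsilon_1$), so that along the stopped trajectory the three inequalities in \eqref{1.6} remain available.

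Next I would carry out the Lyapunov/Dynkin computation on the stopped process. Applying Dynkin's formula to $V(t\wedge\tau, x_{t\wedge\tau})$ and invoking $LV \le 0$ gives
\[
\mathbf{E}V(t\wedge\tau, x_{t\wedge\tau}) \le \mathbf{E}V(0,\phi) \le c_2 \|\phi\|_0^2 \le c_2\delta^2,
\]
where the second inequality is part of \eqref{1.6} and the last uses $\mathbf{P}(\|\phi\|_0 \le \delta) = 1$. On the event $\{\tau \le t\}$ continuity of paths forces $|x(\tau)| = \varepsilon_1$, so the lower bound $V(t,x_t)\ge c_1 |x(t)|^2$ gives $V(\tau,x_\tau)\ge c_1 \varepsilon_1^2$. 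Combining,
\[
c_1\varepsilon_1^2\,\mathbf{P}(\tau\le t) \le \mathbf{E}\bigl[\mathbf{1}_{\{\tau\le t\}} V(\tau,x_\tau)\bigr] \le \mathbf{E}V(t\wedge\tau, x_{t\wedge\tau}) \le c_2\delta^2.
\]
Letting $t\to\infty$ yields $\mathbf{P}(\tau<\infty)\le c_2\delta^2/(c_1\varepsilon_1^2)$, and choosing $\delta$ so that $\delta^2 < \varepsilon_2 c_1\varepsilon_1^2/c_2$ (and below the threshold from the hypothesis) finishes the argument.

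The main obstacle is the localization step. The hypotheses in \eqref{1.6} are assumed only for solutions starting from initial data almost surely in a small ball, and the functional $V$ need not be bounded globally. The stopping at $\tau$ plays a double role: it keeps the trajectory inside the region where \eqref{1.6} is valid, and it supplies the boundedness needed to pass from It\^o's formula to the expectation identity (i.e.\ to kill the local-martingale part in Dynkin's formula). Once that localization is handled, the rest of the argument is the standard Chebyshev-type bound via a nonnegative supermartingale.
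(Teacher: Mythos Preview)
The paper does not actually prove Theorem~\ref{T1.2}: it is quoted from \cite{Sh13} and stated without proof, so there is no in-paper argument to compare against. Your proof is the standard one for such results (stopping time at first exit from a ball, Dynkin's formula on the stopped process, nonnegative supermartingale plus Chebyshev-type bound), and it is correct; in particular your localization remark is exactly the point that makes the hypothesis ``for $\phi$ with $\mathbf P(\|\phi\|_0\le\delta)=1$, $\delta$ small'' do real work, and it matches how the theorem is used later in the paper (see the proof of Theorem~\ref{T3.1}, where $LV\le0$ is only verified under $|x(s)|\le\varepsilon$).
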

Via Theorems \ref{T1.1}, \ref{T1.2} a construction of stability
conditions for a given stochastic differential equation is reduced to
construction of appropriate Lyapunov functionals. Via the general
method of the Lyapunov functionals construction \cite{Sh10,Sh11,Sh13},
below some multi-condition of stability in probability for the zero
solution of \xch{Equation}{the equation} \eqref{1.1} is obtained.

\section{Exponential mean square stability of the linear equation}

In this section sufficient conditions of exponential mean square
stability are obtained for the linear part of \xch{Equation}{the equation} \eqref{1.1},
i.e., for \xch{Equation}{the equation} \eqref{1.1} with $g(t,x_t)\equiv0$.

Let $n_i$, $i=1,2$, be integers such that $0\le n_i\le n$. Put
\begin{equation}
\label{2.1} %
\begin{aligned}
S(t)&=\sum^{n_1}_{k=0}a_k(t\,{+}\,h_k)\,{+}\sum^{n_2}_{k=1}b_k(t)h_k, \quad\! m_1\,{=}\,\min\{n_1,n_2\}, \quad\! m_2\,{=}\,\max\{n_1,n_2\},
\\
R_k(t,s)&=\lleft\{ %
\begin{array}{@{}ll}
a_k(s+h_k)+(s-t+h_k)b_k(s),& k=1,\ldots,m_1\\
a_k(s+h_k)\quad\text{if}\quad n_1>n_2, & k=m_1+1,\ldots,m_2,\\
(s-t+h_k)b_k(s)\quad\text{if}\quad n_1<n_2, & k=m_1+1,\ldots,m_2,
\end{array} %
\rright.
\\
R(t)&=\sum^{m_2}_{k=1}\int
^t_{t-h_k}|R_k(t,s)|ds,\qquad
I_k(i,j)=\lleft\{ %
\begin{array}{@{}lll}
1&\text{if}&k\in[i,j],\\
0&\text{if}&k\notin[i,j],
\end{array} %
\rright.
\\
R^\lambda_k(t,s)&=\bigl(S(t)-\lambda\bigr)R_k(t,s)I_k(1,m_2)-b_k(s)I_k(n_2+1,n),
\\
P_\lambda(t)&=\lambda R(t)+\sum^n_{i=n_1+1}|a_i(t)|+
\sum^n_{i=n_2+1}\int^t_{t-h_i}|b_i(
\theta)|d\theta,\\
Q_k^\lambda(t,s)&=|R^\lambda_k(t,s)|+P_\lambda(t)|R_k(t,s)|I_k(1,m_2)+R(t)|b_k(s)|I_k(n_2+1,n),\\
F(t,\lambda)&=\lambda-2S(t)+\sum^n_{k=1} \Biggl(\int^t_{t-h_k}|R^\lambda_k(t,s)|ds+\int^{t+h_k}_te^{\lambda h_k}Q_k^\lambda(\theta,t)d\theta \Biggr)\\
&\quad +\sum^n_{k=n_1+1} \bigl(|a_k(t)|+e^{\lambda h_k}\bigl(1+R(t+h_k)\bigr)|a_k(t+h_k)| \bigr) +e^{\lambda\tau}\sigma^2(t+\tau).
\end{aligned} %
\end{equation}

By virtue of $S(t)$ and $R_k(t,s)$ defined in \eqref{2.1} \xch{Equation}{the equation}
\eqref{1.1} can be presented in the form of a neutral type stochastic
differential equation \cite{Sh13}
\begin{equation}
\label{2.2} %
\begin{aligned}
dz(t,x_t)&=\Biggl(-S(t)x(t)-\sum\limits^n_{k=n_1+1}a_k(t)x(t-h_k)\\
&\quad -\sum\limits^n_{k=n_2+1}\int\limits^t_{t-h_k}b_k(s)x(s)ds-g(t,x_t)\Biggr)dt\\
&\quad -\sigma(t)x(t-\tau)dw(t),
\end{aligned} %
\end{equation}
where
\begin{equation}
\label{2.3} z(t,x_t)=x(t)-\sum^{m_2}_{k=1}
\int^t_{t-h_k}R_k(t,s)x(s)ds.
\end{equation}
\begin{theorem}\label{T2.1} If $g(t,x_t)=0$,
\begin{equation}
\label{2.4} \inf_{t\ge0}S(t)>0, \qquad \sup_{t\ge0}R(t)<1,
\end{equation}
and there exists $\lambda>0$ such that $F(t,\lambda)\le0$ then the zero
solution of \xch{Equation}{the equation} \eqref{1.1} is exponentially mean square stable.
\end{theorem}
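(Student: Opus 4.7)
The plan is to apply Theorem~\ref{T1.1} with $\mu(t)=c_1e^{\lambda t}$, so I need a Lyapunov functional $V\in D$ such that $\mathbf EV(t,x_t)\ge c_1e^{\lambda t}\mathbf Ex^2(t)$, $\mathbf EV(0,\phi)\le c_2\|\phi\|^2$ and $\mathbf ELV(t,x_t)\le 0$. The natural leading piece, suggested by the neutral representation \eqref{2.2}--\eqref{2.3}, is $V_1(t,x_t)=e^{\lambda t}z^2(t,x_t)$.

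First I would apply It\^o's formula to $V_1$ along \eqref{2.2} with $g\equiv 0$. The drift contains the main term $-2zSx$, cross terms $-2z\,a_k(t)x(t-h_k)$ for $k>n_1$ and $-2z\int_{t-h_k}^tb_k(s)x(s)ds$ for $k>n_2$, the dissipation $\lambda z^2$, and the It\^o correction $e^{\lambda t}\sigma^2(t)x^2(t-\tau)$. Substituting the definition \eqref{2.3} of $z$ into $-2zSx$ produces $-2Sx^2$ plus cross products between $x(t)$ and delayed values of $x$. Splitting $S=(S-\lambda)+\lambda$ and bounding every cross product by $2ab\le a^2+b^2$ with weights matched so that the $(S-\lambda)$-piece yields the kernel $R_k^\lambda$ and the $\lambda$-piece yields the factors $P_\lambda$, $P_\lambda^{-1}$ (giving the three-term structure of $Q_k^\lambda$ in \eqref{2.1}), together with the Cauchy--Schwarz estimate $(\int|R_k|x(s)ds)^2\le R(t)\int|R_k|x^2(s)ds$, rewrites the drift as the sum of (i) an instantaneous coefficient of $e^{\lambda t}x^2(t)$ matching the $t$-local part of $F(t,\lambda)$, (ii) non-local residues $e^{\lambda t}\sum_k\int_{t-h_k}^tQ_k^\lambda(t,s)x^2(s)ds$, (iii) delayed-discrete residues $e^{\lambda t}(1+R(t))\sum_{k>n_1}|a_k(t)|x^2(t-h_k)$, and (iv) the noise residue $e^{\lambda t}\sigma^2(t)x^2(t-\tau)$.

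To cancel the three types of residues I would add three compensating functionals. A double-integral $V_2$ whose $t$-derivative absorbs the $Q_k^\lambda$-integral in (ii) and produces the contribution $\sum_k\int_t^{t+h_k}e^{\lambda h_k}Q_k^\lambda(\theta,t)d\theta$ to the coefficient of $e^{\lambda t}x^2(t)$. A single-integral $V_3\sim\sum_{k>n_1}\int_{t-h_k}^te^{\lambda(s+h_k)}(1+R(s+h_k))|a_k(s+h_k)|x^2(s)ds$, which pushes the $x^2(t-h_k)$ residues (iii) to $x^2(t)$ via the shift $s\mapsto s+h_k$ and contributes $\sum_{k>n_1}e^{\lambda h_k}(1+R(t+h_k))|a_k(t+h_k)|$. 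And $V_4=\int_{t-\tau}^te^{\lambda(s+\tau)}\sigma^2(s+\tau)x^2(s)ds$, which handles the noise residue (iv) and yields the coefficient $e^{\lambda\tau}\sigma^2(t+\tau)$. With $V=V_1+V_2+V_3+V_4$, the coefficient of $e^{\lambda t}x^2(t)$ in $LV$ assembles exactly into $F(t,\lambda)$, which is $\le 0$ by hypothesis.

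For the lower bound, since $\sup_tR(t)=R_*<1$, the estimate $x^2(t)\le(1+\varepsilon)z^2(t,x_t)+(1+\varepsilon^{-1})R(t)\sum_k\int_{t-h_k}^t|R_k(t,s)|x^2(s)ds$, obtained by applying Cauchy--Schwarz to \eqref{2.3}, lets me choose $\varepsilon$ small and absorb the tail into the non-negative $V_2$, giving $\mathbf EV(t,x_t)\ge c_1e^{\lambda t}\mathbf Ex^2(t)$ for some $c_1>0$; boundedness of the coefficients on $[-h,0]$ gives $\mathbf EV(0,\phi)\le c_2\|\phi\|^2$, and Theorem~\ref{T1.1} concludes. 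The main obstacle is combinatorial rather than conceptual: one must split every sum at the thresholds $m_1,m_2,n_1,n_2$ governing the case-by-case definition of $R_k(t,s)$ in \eqref{2.1}, choose the inner weight of $V_2$ carefully so that the $e^{\lambda h_k}$ factor of $F$ is produced without residue, and carry out the changes of variables $s\mapsto s+h_k$, $s\mapsto s+\tau$ consistently, so that the coefficient of $e^{\lambda t}x^2(t)$ in $LV$ reassembles into exactly $F(t,\lambda)$.
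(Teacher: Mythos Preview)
Your approach is essentially identical to the paper's: take $V_1=e^{\lambda t}z^2(t,x_t)$, estimate $LV_1$ along the neutral representation \eqref{2.2}--\eqref{2.3} via $2ab\le a^2+b^2$ and Cauchy--Schwarz to isolate the $Q_k^\lambda$-integrals, then add compensating delay functionals (the paper packages your $V_2,V_3,V_4$ into a single $V_2$) so that $LV\le e^{\lambda t}F(t,\lambda)x^2(t)\le 0$, and invoke Theorem~\ref{T1.1}. Your sketch is in fact more explicit than the paper on two points the paper leaves implicit---the mechanism producing the $P_\lambda$-weight inside $Q_k^\lambda$ and the lower bound $\mathbf EV\ge c_1e^{\lambda t}\mathbf Ex^2(t)$ via $\sup_tR(t)<1$---so apart from the minor slip that no $P_\lambda^{-1}$ factor actually appears (the cross terms are split with unit weights), there is nothing to correct.
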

\begin{proof} Via \eqref{2.4}, the zero solution of the auxiliary
equation $\dot y(t)=-S(t)y(t)$ is exponentially stable and the function
$v(t)=e^{\lambda t}y^2(t)$, $\lambda>0$, is a Lyapunov function for
this equation. Following the procedure of the Lyapunov functionals
construction \cite{Sh10,Sh11,Sh13}, we will construct Lyapunov
functional $V$ for \xch{Equations}{the equation} \eqref{2.2}, \eqref{2.3} in the form
$V=V_1+V_2$, where $V_1(t,x_t)=e^{\lambda t}z^2(t,x_t)$ and the
additional functional $V_2$ will be chosen below. Using \eqref{1.4} and
\eqref{2.2} with $g(t,x_t)=0$, we have
\begin{equation*}
\aligned LV_1(t,x_t)&=e^{\lambda t} \Biggl[\lambda
z^2(t,x_t)+\sigma^2(t)x^2(t-\tau
)-2z(t,x_t) \Biggl(S(t)x(t)&
\\
&\quad +\sum^n_{k=n_1+1}a_k(t)x(t-h_k)+
\sum^n_{k=n_2+1}\int^t_{t-h_k}b_k(s)x(s)ds
\Biggr) \Biggr]. \endaligned
\end{equation*}
Calculating and estimating $z^2(t,x_t)$ via \eqref{2.3}, \eqref{2.1},
one can show that
\begin{equation*}
\aligned LV_1(t,x_t)&\le e^{\lambda t} \Biggl[ \Biggl(
\lambda-2S(t)+\sum^n_{k=1}\int
^t_{t-h_k}|R^\lambda_k(t,s)|ds+
\sum^n_{k=n_1+1}|a_k(t)|
\Biggr)x^2(t)
\\
&\quad +\sum^n_{k=1}\int^t_{t-h_k}Q^\lambda_k(t,s)x^2(s)ds+
\sum^n_{k=n_1+1}\bigl(1+R(t)
\bigr)|a_k(t)|x^2(t-h_k)
\\
&\quad +\sigma^2(t)x^2(t-\tau) \Biggr]. \endaligned
\end{equation*}
To neutralize the terms with delays in the estimate of $LV_1$ consider
the additional functional
\begin{equation*}
\aligned V_2(t,x_t)&=\sum^n_{k=1}
\int^t_{t-h_k}\int^{s+h_k}_te^{\lambda
(s+h_k)}Q_k^\lambda(
\theta,s)x^2(s)d\theta ds
\\
&\quad +\int^t_{t-\tau}e^{\lambda(s+\tau)}
\sigma^2(s+\tau)x^2(s)ds
\\
&\quad +\sum^n_{k=n_1+1}\int^t_{t-h_k}e^{\lambda
(s+h_k)}
\bigl(1+R(s+h_k)\bigr)|a_k(s+h_k)|x^2(s)ds.
\endaligned
\end{equation*}

Calculating $LV_2(t,x_t)$, via \eqref{2.1} and $F(t,\lambda)\le0$ for
$V=V_1+V_2$ we obtain $LV(t,x_t)\le e^{\lambda t}F(t,\lambda)x^2(t)\le0$.
So, the constructed functional $V(t,x_t)$ satisfies \xch{Conditions}{the conditions}
\eqref{1.5}. Via Theorem \ref{T1.1} the zero solution of \xch{Equation}{the equation}
\eqref{1.1} with $g(t,x_t)=0$ is exponentially mean square stable. The
proof is completed.
\end{proof}
\begin{corol}\label{C2.1} If \xch{Conditions}{the conditions} \eqref{2.4}, $\sup_{t\ge
0}S(t)<\infty$ and
\begin{equation}
\label{2.5} \aligned &\sup_{t\ge0}\frac{1}{S(t)} \Biggl[
\sum^n_{k=1} \Biggl(\int
^t_{t-h_k}|R^0_k(t,s)|ds +
\int^{t+h_k}_tQ_k^0(
\theta,t)d\theta \Biggr)
\\
&\quad +\sum^n_{k=n_1+1} \bigl(|a_k(t)|+
\bigl(1+R(t+h_k)\bigr)|a_k(t+h_k)| \bigr)+
\sigma ^2(t+\tau) \Biggr]<2 \endaligned
\end{equation}
hold then the zero solution of \xch{Equation}{the equation} \eqref{1.1} is
exponentially mean square stable.
\end{corol}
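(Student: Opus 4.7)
The plan is to reduce Corollary~\ref{C2.1} to Theorem~\ref{T2.1} by showing that the strict inequality in \eqref{2.5} forces the existence of some $\lambda>0$ for which $F(t,\lambda)\le 0$ uniformly in $t\ge 0$; exponential mean square stability of the zero solution of \eqref{1.1} with $g(t,x_t)\equiv 0$ then follows from Theorem~\ref{T2.1}.

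First I would substitute $\lambda=0$ into all the quantities defined in \eqref{2.1}. The factors $e^{\lambda h_k}$ and $e^{\lambda\tau}$ collapse to $1$, the $\lambda R(t)$ summand drops out of $P_\lambda(t)$, and $R_k^\lambda(t,s)$ reduces to $R_k^0(t,s)=S(t)R_k(t,s)I_k(1,m_2)-b_k(s)I_k(n_2+1,n)$. Rearranging, $F(t,0)$ equals $-2S(t)$ plus exactly the bracket appearing in \eqref{2.5}. Dividing by $S(t)>0$, hypothesis \eqref{2.5} is therefore equivalent to $\sup_{t\ge 0} F(t,0)/S(t)<0$, and combined with $\inf_{t\ge 0} S(t)>0$ this yields a uniform bound $F(t,0)\le -\eta$ for some $\eta>0$ and every $t\ge 0$.

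Next I would establish that $F(t,\lambda)\to F(t,0)$ as $\lambda\downarrow 0$ uniformly in $t\ge 0$. The difference $F(t,\lambda)-F(t,0)$ decomposes into the explicit $\lambda$, the increments $e^{\lambda h_k}-1$ and $e^{\lambda\tau}-1$ multiplying integrals whose integrands are uniformly bounded, the $-\lambda R_k(t,s)I_k(1,m_2)$ appearing inside $R_k^\lambda-R_k^0$, and the propagation of the $\lambda R(t)$ summand of $P_\lambda$ through $Q_k^\lambda-Q_k^0$. Using boundedness of $a_k$, $b_k$, $\sigma$, finiteness of the delays, $\sup_t S(t)<\infty$, and $\sup_t R(t)<1$, each of these contributions admits an estimate of the form $C\lambda$ with $C$ independent of $t$. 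Fixing $\lambda>0$ small enough that $|F(t,\lambda)-F(t,0)|\le \eta/2$ gives $F(t,\lambda)\le -\eta/2<0$ for every $t\ge 0$, and Theorem~\ref{T2.1} delivers the conclusion. The only delicate step is the uniform-in-$t$ tracking of these $\lambda$-corrections, which amounts to bookkeeping with \eqref{2.1} once the standing boundedness hypotheses are invoked.
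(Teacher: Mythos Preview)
Your proposal is correct and follows exactly the paper's own route: the paper simply notes that \eqref{2.5} is equivalent to $\sup_{t\ge 0}F(t,0)<0$, whence a small enough $\lambda>0$ gives $F(t,\lambda)\le 0$ and Theorem~\ref{T2.1} applies. You supply the details of the uniform-in-$t$ continuity of $F(t,\lambda)$ at $\lambda=0$ that the paper leaves implicit, correctly identifying that $\sup_{t\ge 0}S(t)<\infty$ is what controls the $S(\theta)$-dependent terms inside $Q_k^0$.
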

For the proof it is enough to note that \eqref{2.5} is equivalent to
the condition\break $\sup_{t\ge0}F(t,0)<0$ from which it follows that there
exists small enough $\lambda>0$ such that the condition $F(t,\lambda)\le
0$ holds too.

\section{Stability in probability of the nonlinear equation}

In this section it is shown that the sufficient conditions for
exponential mean square stability of the linear part of \xch{Equation}{the equation}
\eqref{1.1} %at the same time
also are sufficient conditions for stability in
probability of the initial nonlinear equation.
\begin{theorem}\label{T3.1} Let \xch{Conditions}{the conditions} \eqref{2.4} hold and
there exist $\lambda>0$ and $\varepsilon>0$ such that
\begin{equation}
\label{3.1} F(t,\lambda)+\varepsilon^{\alpha-1}G \Biggl(1+2e^{\lambda h}+
\sum^{m_2}_{k=1}e^{\lambda h_k}\int
^{t+h_k}_t|R_k(\theta,t)|d\theta \Biggr)
\le0,
\end{equation}
where $F(t,\lambda)$ is defined in \eqref{2.1}. Then the zero solution
of \xch{Equation}{the equation} \eqref{1.1} is stable in probability.
\end{theorem}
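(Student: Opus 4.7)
The plan is to verify the three pointwise conditions of Theorem \ref{T1.2} for a Lyapunov functional $V = V_1 + V_2 + V_3$, where $V_1(t, x_t) = e^{\lambda t} z^2(t, x_t)$ and $V_2$ are the same pieces constructed in the proof of Theorem \ref{T2.1} for the linear part, and $V_3$ is an additional term introduced specifically to handle the nonlinearity $g$. The starting point is a standard first-exit stopping-time observation: for any initial function $\phi$ with $\mathbf{P}(\|\phi\|_0 \le \delta) = 1$ and $\delta \le \varepsilon$, up to the first exit time $\tau_\varepsilon = \inf\{t \ge 0 : |x(t)| \ge \varepsilon\}$ the entire history $x(t+s)$, $s \in [-h, 0]$, satisfies $|x(t+s)| \le \varepsilon$ pointwise. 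This turns the superlinear bound in \eqref{1.1} into the linear bound
$$|g(t, x_t)| \le \int_{-h}^0 |x(t+s)|^\alpha dG(s) \le \varepsilon^{\alpha-1} \int_{-h}^0 |x(t+s)| dG(s),$$
so $g$ acts as a linear-growth perturbation of size $\varepsilon^{\alpha-1}$.

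Writing \eqref{1.1} in the neutral form \eqref{2.2}--\eqref{2.3}, the only modification to the computation of $LV_1$ compared with the proof of Theorem \ref{T2.1} is the extra cross term $-2 e^{\lambda t} z(t, x_t) g(t, x_t)$. Applying $2|z||x(t+s)| \le z^2 + x^2(t+s)$ under the $dG$-integral together with the linearization above yields
$$2|z(t, x_t) g(t, x_t)| \le \varepsilon^{\alpha-1} G z^2(t, x_t) + \varepsilon^{\alpha-1} \int_{-h}^0 x^2(t+s) dG(s).$$
Expanding $z^2$ via \eqref{2.3} together with $(a+b)^2 \le 2a^2 + 2b^2$ and Cauchy--Schwarz against $|R_k(t,s)|\,ds$ splits the $Gz^2$ piece into a pointwise contribution in $x^2(t)$ plus delay integrals in $x^2(s)$ weighted by $|R_k(t,s)|\,ds$ and $dG(s)$. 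These delay integrals are of precisely the type already handled in the proof of Theorem \ref{T2.1}, and are absorbed by enlarging $V_2$ with an additional functional $V_3$ of the form
$$V_3(t, x_t) = \varepsilon^{\alpha-1} \int_{-h}^0 \int_{t+s}^t e^{\lambda(\theta - s)} x^2(\theta)\, d\theta\, dG(s) + \text{analogous terms with kernel } e^{\lambda h_k} |R_k(\theta, t)|,$$
chosen so that $LV_3$ cancels every delay integral while releasing pointwise $e^{\lambda t} x^2(t)$ contributions whose total coefficient equals exactly $\varepsilon^{\alpha-1} G \bigl(1 + 2 e^{\lambda h} + \sum_{k=1}^{m_2} e^{\lambda h_k} \int_t^{t+h_k} |R_k(\theta, t)|\, d\theta\bigr)$. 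Summing all pointwise coefficients of $e^{\lambda t} x^2(t)$ reproduces $F(t, \lambda)$ plus precisely the bracket in \eqref{3.1}, so $LV \le 0$. The remaining inequalities $V(t, x_t) \ge c_1 |x(t)|^2$ and $V(0, \phi) \le c_2 \|\phi\|_0^2$ required by Theorem \ref{T1.2} follow as in Theorem \ref{T2.1}, the first from the lower estimate $z^2 \ge (1 - R(t))^2 x^2(t)$ available under \eqref{2.4} and the second by direct majorization of each piece of $V$. Theorem \ref{T1.2} then delivers stability in probability.

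The main obstacle is the precise bookkeeping of the second step: the auxiliary functional $V_3$ must be constructed so that its generator releases \emph{exactly} the three terms $1$, $2 e^{\lambda h}$, and $\sum_{k=1}^{m_2} e^{\lambda h_k} \int_t^{t+h_k} |R_k(\theta, t)|\, d\theta$ displayed inside the bracket of \eqref{3.1}, with no residual delay integrals. The conceptual content is the simple observation that under $|x| \le \varepsilon$ the superlinear forcing behaves as a linear perturbation of order $\varepsilon^{\alpha-1}$, so that the Lyapunov construction of Theorem \ref{T2.1} goes through with $F(t, \lambda)$ merely shifted by that perturbation.
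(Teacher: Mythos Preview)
Your overall strategy is exactly the paper's: take $V=V_1+V_2+V_3$ with $V_1,V_2$ from Theorem~\ref{T2.1}, linearize $g$ under $|x|\le\varepsilon$, and absorb the resulting delay integrals with a suitable $V_3$ so that Theorem~\ref{T1.2} applies. However, the particular way you estimate the cross term $-2e^{\lambda t}z(t,x_t)g(t,x_t)$ does \emph{not} produce the bracket in \eqref{3.1}, so as written you have not proved the stated theorem.

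Concretely, you bound $2|z||x(t+s)|\le z^2+x^2(t+s)$ and then expand $z^2$ via $(a+b)^2\le 2a^2+2b^2$. This yields a pointwise contribution $2Gx^2(t)$, a $dG$-integral with coefficient $1$, and an $|R_k|$-integral with coefficient $2GR(t)$ (bounded by $2G$ via \eqref{2.4}). After the $V_3$ cancellation the released coefficient of $e^{\lambda t}x^2(t)$ is $\varepsilon^{\alpha-1}G\bigl(2+e^{\lambda h}+2\sum_{k=1}^{m_2}e^{\lambda h_k}\int_t^{t+h_k}|R_k(\theta,t)|\,d\theta\bigr)$, not the $1+2e^{\lambda h}+\sum(\cdots)$ appearing in \eqref{3.1}. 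The two expressions are incomparable, so your argument establishes a variant condition rather than \eqref{3.1} itself. The paper instead first splits $z=x(t)-\sum_{k}\int R_k x(s)\,ds$ \emph{before} applying $2|ab|\le a^2+b^2$, estimating $2|x(t)g|$ and $2|\sum\int R_k x\,ds\cdot g|$ separately as in \eqref{3.3}--\eqref{3.4}; this gives the coefficients $G$, $(1+R(t))\le 2$, and $G$ respectively, and after $V_3$ recovers exactly \eqref{3.1}.

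A minor side remark: your claimed lower bound $z^2\ge(1-R(t))^2x^2(t)$ is not correct as stated, since the integral in \eqref{2.3} involves past values $x(s)$, $s<t$, not $x(t)$. The inequality $V(t,x_t)\ge c_1|x(t)|^2$ for neutral-type functionals under $\sup_t R(t)<1$ requires a separate argument (standard in the references the paper cites); the paper also passes over this point.
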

\begin{proof} Using the functionals $V_1$, $V_2$, defined in the proof
of Theorem \ref{T2.1}, via \eqref{2.2}, \eqref{2.3} we obtain
\begin{equation}
\label{3.2} %
\begin{aligned} L\bigl(V_1(t,x_t)+V_2(t,x_t)
\bigr)&\le e^{\lambda t} \Biggl[F(t,\lambda)x^2(t)-2x(t)g(t,x_t)
\\
&\quad +2\sum^{m_2}_{k=1}\int^t_{t-h_k}R_k(t,s)x(s)dsg(t,x_t)
\Biggr]. \end{aligned} %
\end{equation}
Note that for $|x(s)|\le\varepsilon$, $s\le t$, via \eqref{1.1} and
\eqref{2.1} we have
\begin{equation}
\label{3.3} \aligned 2|x(t)g(t,x_t)|&\le2\int_{-h}^0|x(t)||x(t+s)|^\alpha
dG(s)
\\
&\le\varepsilon^{\alpha-1}\int_{-h}^0
\bigl(x^2(t)+x^2(t+s) \bigr)dG(s)
\\
&=\varepsilon^{\alpha-1} \Biggl(Gx^2(t)+\int
_{-h}^0x^2(t+s)dG(s) \Biggr)
\endaligned
\end{equation}
and
\begin{equation}\label{3.4}
\aligned
2 \Bigg|&\sum^{m_2}_{k=1}\int^t_{t-h_k}R_k(t,s)x(s)dsg(t,x_t)\Bigg|\\
&\quad\le2\sum^{m_2}_{k=1}\int^t_{t-h_k}\int_{-h}^0|R_k(t,s)||x(s)||x(t+\tau)|^\alpha dsdG(\tau)\\
&\quad\le\varepsilon^{\alpha-1}\sum^{m_2}_{k=1}\int^t_{t-h_k}\int_{-h}^0|R_k(t,s)|\bigl(x^2(s)+x^2(t+\tau)\bigr)dsdG(\tau)\\
&\quad =\varepsilon^{\alpha-1} \Biggl(G\sum^{m_2}_{k=1}\int^t_{t-h_k}|R_k(t,s)|x^2(s)ds+R(t)\int_{-h}^0x^2(t+\tau)dG(\tau) \Biggr).
\endaligned
\end{equation}
Substituting \eqref{3.3}, \eqref{3.4} into \eqref{3.2}, we obtain
\begin{equation*}
\aligned L\bigl(V_1(t,x_t)+V_2(t,x_t)
\bigr)&\le e^{\lambda t} \Biggl[F(t,\lambda )x^2(t)+
\varepsilon^{\alpha-1} \Biggl(Gx^2(t)
\\
&\quad+\bigl(1+R(t)\bigr)\int_{-h}^0x^2(t+
\tau)dG(\tau)
\\
&\quad+G\sum^{m_2}_{k=1}\int
^t_{t-h_k}|R_k(t,s)|x^2(s)ds
\Biggr) \Biggr]. \endaligned
\end{equation*}
Using the additional functional
\begin{equation*}
\aligned V_3(t,x_t)&=\varepsilon^{\alpha-1}
\Biggl(2\int_{-h}^0\int_{t+s}^te^{\lambda(\tau+h)}x^2(
\tau)d\tau dG(s)
\\
&\quad +G\sum^{m_2}_{k=1}\int^t_{t-h_k}
\int^{s+h_k}_te^{\lambda(s+h_k)}|R_k(\theta
,s)|x^2(s)d\theta ds \Biggr) \endaligned
\end{equation*}
with
\begin{equation*}
\aligned LV_3(t,x_t)&=\varepsilon^{\alpha-1}e^{\lambda t}
\Biggl[2\int_{-h}^0\bigl(e^{\lambda h}x^2(t)-e^{\lambda(s+h)}x^2(t+s)
\bigr)dG(s)
\\
&\quad +G\sum^{m_2}_{k=1} \Biggl(e^{\lambda h_k}
\int^{t+h_k}_t|R_k(\theta ,t)|d\theta
x^2(t)
\\
&\quad -\int^t_{t-h_k}e^{\lambda(s+h_k-t)}|R_k(t,s)|x^2(s)ds
\Biggr) \Biggr], \endaligned
\end{equation*}
for the functional $V=V_1+V_2+V_3$ via \eqref{3.1} we obtain
\begin{equation*}
\aligned
&LV(t,x_t)\\
&\quad  \le e^{\lambda t} \Biggl[F(t,\lambda)+\varepsilon^{\alpha-1}G
\Biggl(1+2e^{\lambda h}+\sum^{m_2}_{k=1}e^{\lambda h_k}
\int^{t+h_k}_t|R_k(\theta,t)|d\theta
\Biggr) \Biggr]x^2(t)
\\
&\quad  \le0. \endaligned
\end{equation*}
So, the constructed functional $V(t,x_t)$ satisfies \xch{Conditions}{the conditions} \eqref{1.6}. Via Theorem \ref{T1.2} the zero solution of \xch{Equation}{the equation}
\eqref{1.1} is stable in probability. The proof is completed.
\end{proof}
\begin{corol} \label{C3.1} If \xch{Conditions}{the conditions} \eqref{2.4}, $\sup_{t\ge
0}S(t)<\infty$ and \eqref{2.5} hold then the zero solution of \xch{Equation}{the equation} \eqref{1.1} is stable in probability.
\end{corol}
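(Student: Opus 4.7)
The plan is to reduce this to Theorem \ref{T3.1} by producing admissible parameters $\lambda>0$ and $\varepsilon>0$ from the hypotheses. As observed immediately after Corollary \ref{C2.1}, condition \eqref{2.5} is exactly $\sup_{t\ge0}F(t,0)<0$. Since $F(t,\lambda)$ is continuous and nondecreasing in $\lambda$ (the parameter $\lambda$ enters only through the factors $\lambda$, $e^{\lambda h_k}$, $e^{\lambda\tau}$ in \eqref{2.1}), and the difference $F(t,\lambda)-F(t,0)$ admits a $t$-uniform bound, there exists $\lambda>0$ such that
\[
\sup_{t\ge0}F(t,\lambda)<0.
\]
This is exactly the same passage from $\lambda=0$ to small $\lambda>0$ that is used to justify Corollary \ref{C2.1}.

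Next I would show that the extra term appearing in condition \eqref{3.1},
\[
M(t,\lambda)\;=\;G\Biggl(1+2e^{\lambda h}+\sum_{k=1}^{m_2}e^{\lambda h_k}\int_{t}^{t+h_k}|R_k(\theta,t)|d\theta\Biggr),
\]
is uniformly bounded in $t\ge0$. Indeed, $G<\infty$ by the standing assumption on the measure $dG$, and each $R_k(\theta,t)$ is a linear combination of $a_k(t+h_k)$ and $(t-\theta+h_k)b_k(t)$, so the integrals in $M(t,\lambda)$ are bounded by $h_k|a_k(t+h_k)|+\tfrac12 h_k^2|b_k(t)|$; since $a_k(\cdot)$ and $b_k(\cdot)$ are assumed bounded, $\sup_{t\ge0}M(t,\lambda)<\infty$.

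With $\lambda$ fixed as above, set $\mu:=-\sup_{t\ge0}F(t,\lambda)>0$ and $M:=\sup_{t\ge0}M(t,\lambda)<\infty$. Because $\alpha>1$, I can choose $\varepsilon>0$ so small that $\varepsilon^{\alpha-1}M\le\mu$. Then for every $t\ge0$,
\[
F(t,\lambda)+\varepsilon^{\alpha-1}M(t,\lambda)\;\le\;-\mu+\varepsilon^{\alpha-1}M\;\le\;0,
\]
which is precisely condition \eqref{3.1}. Together with the hypothesis \eqref{2.4}, this verifies the assumptions of Theorem \ref{T3.1}, and therefore the zero solution of \eqref{1.1} is stable in probability.

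The only nonroutine step is the uniform boundedness of $M(t,\lambda)$; once the coefficient bounds noted in the paragraph after \eqref{1.1} are invoked, everything else is a continuity/smallness argument in $\lambda$ and $\varepsilon$.
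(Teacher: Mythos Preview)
Your proof is correct and follows exactly the approach indicated (tersely) in the paper: observe that \eqref{2.5} means $\sup_{t\ge0}F(t,0)<0$, pass to a small $\lambda>0$ by uniform continuity in $\lambda$ (using boundedness of the coefficients), then absorb the uniformly bounded extra term in \eqref{3.1} by choosing $\varepsilon>0$ small, and invoke Theorem~\ref{T3.1}. One small inaccuracy: $\lambda$ also enters $F(t,\lambda)$ through the factor $(S(t)-\lambda)$ inside $R_k^\lambda$, so $F(t,\cdot)$ need not be monotone in $\lambda$; but you do not actually use monotonicity, only the uniform smallness of $F(t,\lambda)-F(t,0)$ for small $\lambda$, which does hold under the boundedness assumptions.
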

For the proof it is enough to note that \eqref{2.5} is equivalent to
the condition\break $\sup_{t\ge0}F(t,0)<0$ from which it follows that there
exist small enough $\lambda>0$ and $\varepsilon>0$ such that \xch{Condition}{the condition} \eqref{3.1} holds.
\begin{remark} \label{R3.1} From $0\le n_i\le n$, $i=1,2$, it follows
that the couple ($n_1,n_2$) in \xch{Equation}{the equation} \eqref{2.2} has $(n+1)^2$
different values. Thus, Theorem \ref{3.1} generally speaking gives
$(n+1)^2$ different stability conditions at once. Some of these
conditions can be infeasible, from some of these conditions can follow
some other conditions, the remaining conditions will complement %of
each other.
\end{remark}

\section{Particular cases of stability condition \eqref{2.5}}

Following Remark \ref{R3.1} let us consider some of possible values of
the couple $(n_1,n_2)$ and obtain appropriate different stability conditions.

If $n_1=n_2=0$ then via \eqref{2.1} $m_1=m_2=0$, $S(t)=a_0(t)$,
$R_k(t,s)=0$, $R(t)=0$, $Q^0_k(t,s)=|R^0_k(t,s)|=|b_k(s)|$, and \xch{Condition}{the condition} \eqref{2.5} takes the form
\begin{equation}\label{4.1}
\sup_{t\ge0}\frac{1}{a_0(t)} \Biggl[
\sum^n_{k=1} \Biggl(|a_k(t)|+|a_k(t+h_k)|+|b_k(t)|h_k+
\int^t_{t-h_k}|b_k(s)|ds \Biggr)+\sigma^2(t+\tau) \Biggr]<2.
\end{equation}

If $n_1=n$, $n_2=0$ then via \eqref{2.1} $m_1=0$, $m_2=n$, and \xch{Condition}{the condition} \eqref{2.5} gives
\begin{equation}\label{4.2}
\aligned
\sup_{t\ge0}\frac{1}{S_0(t)}& \Biggl[\sum^n_{k=1} \Biggl(\int^t_{t-h_k}|S_0(t)a_k(s+h_k)-b_k(s)|ds\\
&+\int^{t+h_k}_t|S_0(\theta)a_k(t+h_k)-b_k(t)|d\theta\\
&+|b_k(t)|\int^{t+h_k}_tA_0(\theta)d\theta+|a_k(t+h_k)|\int^{t+h_k}_tB_0(\theta)d\theta \Biggr)\\
&+\sigma^2(t+\tau) \Biggr]<2, \endaligned
\end{equation}
where
\begin{equation*}
\begin{gathered} S_0(t)=\sum
^n_{k=0}a_k(t+h_k),\qquad
A_0(t)=\sum^n_{k=1}\int
^{t+h_k}_t|a_k(s)|ds,
\\
B_0(t)=\sum^n_{k=1}\int
^t_{t-h_k}|b_k(s)|ds. \end{gathered}
\end{equation*}

If $n_1=0$, $n_2=n$ then $m_1=0$, $m_2=n$, and from \xch{Condition}{the condition} \eqref{2.5} we obtain
\begin{equation}
\label{4.3} \aligned \sup_{t\ge0} \Biggl[\frac{1}{S_1(t)}&
\Biggl(\sum^n_{k=1}|b_k(t)|
\int^{t+h_k}_t \Biggl(S_1(\theta) +\sum
^n_{i=1}|a_i(\theta)| \Biggr)
(t-\theta +h_k)d\theta
\\
&+\sum^n_{k=1}\bigl(|a_k(t)|+
\bigl(1+B_1(t+h_k)\bigr)|a_k(t+h_k)|
\bigr)+\sigma^2(t+\tau ) \Biggr)
\\
&+B_1(t) \Biggr]<2, \endaligned
\end{equation}
where
\begin{equation*}
S_1(t)=a_0(t)+\sum^n_{k=1}b_k(t)h_k,
\qquad B_1(t)=\sum^n_{k=1}\int
^t_{t-h_k}(s-t+h_k)|b_k(s)|ds.
\end{equation*}
If at last $n_1=n_2=n$ then $m_1=m_2=n$, and \xch{Condition}{the condition} \eqref{2.5}
takes the form
\begin{equation}
\label{4.4} \aligned \sup_{t\ge0}& \Biggl[\sum
^n_{k=1}\int^t_{t-h_k}|a_k(s+h_k)+(s-t+h_k)b_k(s)|ds
\\
&+\frac{1}{S_2(t)} \Biggl(\sum^n_{k=1}
\int^{t+h_k}_tS_2(\theta
)|a_k(t+h_k)+(t-\theta+h_k)b_k(t)|d
\theta
\\
&+\sigma^2(t+\tau) \Biggr) \Biggr]<2, \endaligned
\end{equation}
where $S_2(t)=a_0(t)+\sum^n_{k=1}(a_k(t+h_k)+b_k(t)h_k)$.

Using different other combinations of $n_1$ and $n_2$, one can get
different other stability conditions.

\begin{example} \label{E4.1} To demonstrate a possible connection
between the obtained different stability conditions consider, for the
sake of simplicity, the equation with constant coefficients and without
a non-delay term
\begin{equation}
\label{4.5} %
dx(t)+ \Biggl(ax(t-h)+b\int^t_{t-h}x(s)ds+cx^2(t-h) \Biggr)dt
+\sigma x(t-\tau)dw(t)=0.
\end{equation}
For \xch{Equation}{the equation} \eqref{4.5} $n=1$, so, via Remark \ref{R3.1} there are
4 possible stability conditions.
Since in \xch{Equation}{the equation} \eqref{4.5} $a_0=0$ \xch{Condition}{the condition} \eqref{4.1}
does not hold.

Put $p=\frac{1}{2}\sigma^2$. \xch{Condition}{The condition} \eqref{4.2} gives
$p+|a^2-b|h+a|b|h^2<a$ and can be presented in the form
\begin{equation}\label{4.6}
\aligned
b&>\frac{p-a(1-ah)}{h(1+ah)} \quad\text{if} \quad b\le0,\\
b&>\frac{p-a(1-ah)}{h(1-ah)} \quad\text{if} \quad b\in\bigl(0,a^2\bigr), \quad 0<ah<1,\\
b&<\frac{a(1+ah)-p}{h(1+ah)} \quad\text{if} \quad b\ge a^2.
\endaligned
\end{equation}
\xch{Conditions}{The conditions} \eqref{4.3} take the form
\begin{equation}
\label{4.7} |a|<\frac{bh (1-\frac{1}{2}bh^2 )-p}{1+\frac{1}{2}bh^2},\quad0<bh^2<2.
\end{equation}
Calculating the integrals in \eqref{4.4} separately for $a\ge0$ and
$a<0$, from \xch{Condition}{the condition} \eqref{4.4} we obtain
\begin{equation}
\label{4.8} p<\lleft\{ %
\begin{array}{@{}lll}
(a+bh) (1-ah-\frac{1}{2}bh^2 ) & \xch{\text{if}}{if} & a\ge0,\\
(a+bh) (1-ah-\frac{1}{2}bh^2-\frac{a^2}{b} ) & \xch{\text{if}}{if} & a<0,
\end{array}
\rright.
\quad a+bh>0.
\end{equation}
So, if at least one of \xch{Conditions}{the conditions} \eqref{4.6}--\eqref{4.8} holds
then the zero solution of \xch{Equation}{the equation} \eqref{4.5} is stable in
probability and the zero solution of the linear part ($g(t,x_t)\equiv
0$) of this equation is exponentially mean square stable.

\begin{figure}[h!]
\includegraphics{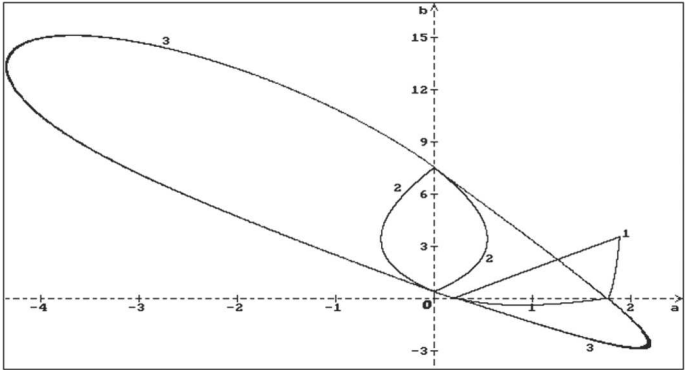}
\caption{Stability regions (1), (2), (3) for \xch{Equation}{the equation} \eqref{4.5},
defined by \xch{Conditions}{the conditions} \eqref{4.6}, \eqref{4.7}, \eqref{4.8}
respectively, for the values of the parameters $h = 0.5$, $p = 0.2$}
\label{Fig1}
\end{figure}
\begin{figure}[h!]
\includegraphics{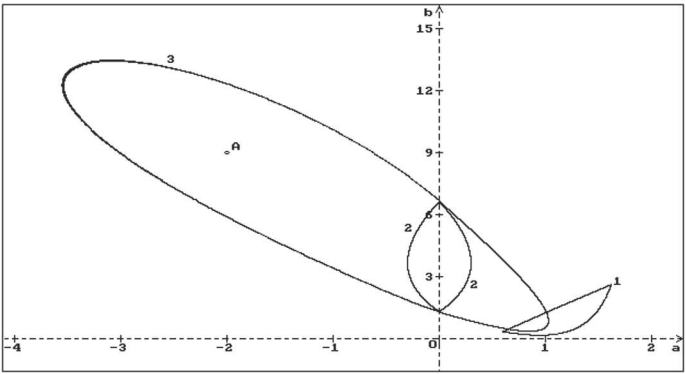}
\caption{Picture similar to Fig.~\ref{Fig1} for the values of the parameters $h =
0.5$, $p = 0.55$}
\label{Fig2}
\end{figure}

\begin{figure}
\includegraphics{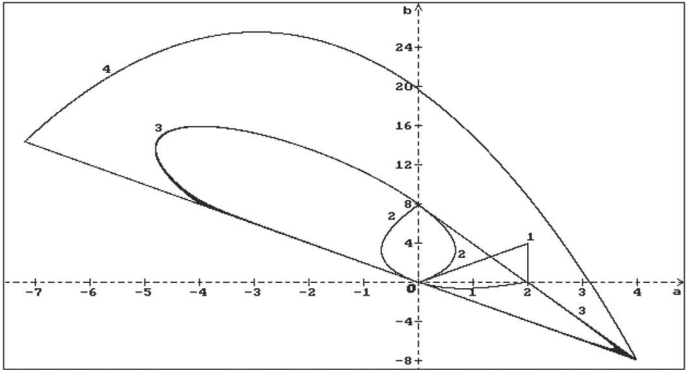}
\caption{Deterministic case ($p = 0$) with $h = 0.5$. The regions (1),
(2), (3) are obtained as in the previous figures, (4) is the exact
stability region given by \eqref{4.10}}
\label{Fig3}
\end{figure}
In Fig.~\ref{Fig1} stability regions for \xch{Equation}{the equation} \eqref{4.5}, given by \xch{Conditions}{the conditions} \eqref{4.6} (the region (1)), \eqref{4.7} (the region (2))
and \eqref{4.8} (the region (3)) are shown in the space of the
parameters $(a,b)$ for $h=0.5$ and $p=0.2$. Note that the regions (1)
and (3) complement of each other but the region (2) is included in the
region (3). It means that \xch{Condition}{the condition} \eqref{4.8} is less
conservative than \eqref{4.7}. Note also that \xch{Condition}{the condition} \eqref{4.8}
coincides with \eqref{4.7} for $a=0$ only. In Fig.~\ref{Fig2} the similar picture
is shown for $h=0.5$ and $p=0.55$.

In the deterministic case ($p=0$) the characteristic equation of the
linear part of \xch{Equation}{the equation} \eqref{4.1} has the form
\begin{equation}
\label{4.9} \omega+ae^{-h\omega}+\frac{b}{\omega}\bigl(1-e^{-h\omega}
\bigr)=0.
\end{equation}
Using $\omega=i\beta$, $i=\sqrt{-1}$, we obtain the system of two
algebraic equations for $a$ and $b$:
\begin{equation*}
a\cos(h\beta)+\frac{b}{\beta}\sin(h\beta)=0,\qquad
a\sin(h\beta)+\frac{b}{\beta}\bigl(1-\cos(h\beta)\bigr)=\beta
\end{equation*}
with the solution
\begin{equation}
\label{4.10} a=\frac{\beta\sin(h\beta)}{1-\cos(h\beta)},\qquad b=-\frac{\beta^2\cos
(h\beta)}{1-\cos(h\beta)}.
\end{equation}
In Fig.~\ref{Fig3} the stability regions (1), (2), (3) obtained respectively from
the sufficient conditions \eqref{4.6}, \eqref{4.7}, \eqref{4.8} in the
deterministic case ($p=0$) are shown for comparison with the exact
stability region (4) given by \xch{Conditions}{the conditions} \eqref{4.10} for $h=0.5$.
The straight line $a+bh=0$ follows from \eqref{4.9} if $\omega\to0$.

In Fig.~\ref{Fig4}, 50 trajectories of the solution of \xch{Equation}{the equation} \eqref{4.5}
are shown at the point $A(-2,9)$ (see Fig.~\ref{Fig2}) for $c=1$, $h=0.5$,
$p=0.55$, $\tau=0$ and the initial function $x(s)=0.6\cos(s)$, $s\in
[-h,0]$. The point $A(-2,9)$ is included in the stability region, thus,
all trajectories converge to zero.
\end{example}
\begin{remark} Suppose that in \xch{Equation}{the equation} \eqref{1.1} discrete delays
are absent, i.e., $a_k(t)=0$, $k=1,\ldots,n$. Then \xch{Conditions}{the conditions} \eqref
{4.2} and \eqref{4.4} coincide respectively with \eqref{4.1} and \eqref
{4.3} and are
\begin{align}
&\sup_{t\ge0}\frac{1}{a_0(t)} \Biggl[\sum^n_{k=1} \Biggl(|b_k(t)|h_k+\int^t_{t-h_k}|b_k(s)|ds \Biggr)+\sigma^2(t+\tau) \Biggr]<2,\label{4.11}\\
&\sup_{t\ge0} \Biggl[\frac{1}{S_1(t)} \Biggl(\sum^n_{k=1}|b_k(t)|\int^{t+h_k}_tS_1(\theta) (t-\theta+h_k)d\theta+\sigma^2(t+\tau) \Biggr)+B_1(t) \Biggr]<2,\label{4.12}\\
&B_1(t)=\sum^n_{k=1}\int^t_{t-h_k}(s-t+h_k)|b_k(s)|ds,\nonumber\\
&S_1(t)=a_0(t)+\sum^n_{k=1}b_k(t)h_k.\nonumber
\end{align}
\end{remark}
\begin{example} \label{E4.2} Consider the stochastic differential
equation \eqref{1.1} with $n=1$, $a_0(t)=a$, $h_1=h$, $b_1(t)=be^{-\mu
t}$, $\mu>0$, $\sigma(t)=\sigma e^{-\nu t}$, $\nu>0$,
$g(t,x_t)=cx^2(t-h)$, i.e.,
\begin{equation}\label{4.13} %
\begin{aligned}
&dx(t)+ \Biggl(ax(t)+b\int
^t_{t-h}e^{-\mu s}x(s)ds+cx^2(t-h)\Biggr)dt \\
&\quad +\sigma e^{-\nu t}x(t-\tau)dw(t)=0.
\end{aligned}
\end{equation}
From \eqref{4.11} we obtain the first condition for stability in
probability of the zero solution of \xch{Equation}{the equation} \eqref{4.13}
\begin{equation}
\label{C1} \frac{1}{2}|b| \biggl(h+\frac{1}{\mu}
\bigl(e^{\mu h}-1\bigr) \biggr)+pe^{-2\nu\tau
}<a, \quad p=
\frac{1}{2}\sigma^2.
\end{equation}
Note that the stability condition \eqref{C1} holds for $a>0$ only.
Using \eqref{4.12} one can get a complementary condition of stability
in probability that holds for $a=0$, $b>0$, $\mu\le2\nu$. Really, in
this case
\begin{equation*}
\begin{gathered} B_1(t)=\frac{b}{\mu^2}e^{-\mu t}
\bigl(e^{\mu h}-1-\mu h\bigr), \qquad S_1(t)=bhe^{-\mu t},
\\
\int^{t+h}_tS_1(\theta) (t-\theta+h)d
\theta=\frac{bh}{\mu^2}e^{-\mu
t}\bigl(e^{-\mu h}-1+\mu h\bigr),
\end{gathered} %
\end{equation*}
and stability condition \eqref{4.12} takes the form
\begin{equation*}
\begin{gathered} \sup_{t\ge0} \biggl[
\frac{b}{\mu^2}e^{-\mu t}\bigl(\cosh(\mu h)-1\bigr)+\frac
{p}{bh}e^{(\mu-2\nu)t}e^{-2\nu\tau}
\biggr]<1,
\\
\cosh(\mu h)=\frac{1}{2}\bigl(e^{\mu h}+e^{-\mu h}\bigr),
\quad p=\frac{1}{2}\sigma^2. \end{gathered} %
\end{equation*}

\begin{figure}[h!]
\includegraphics{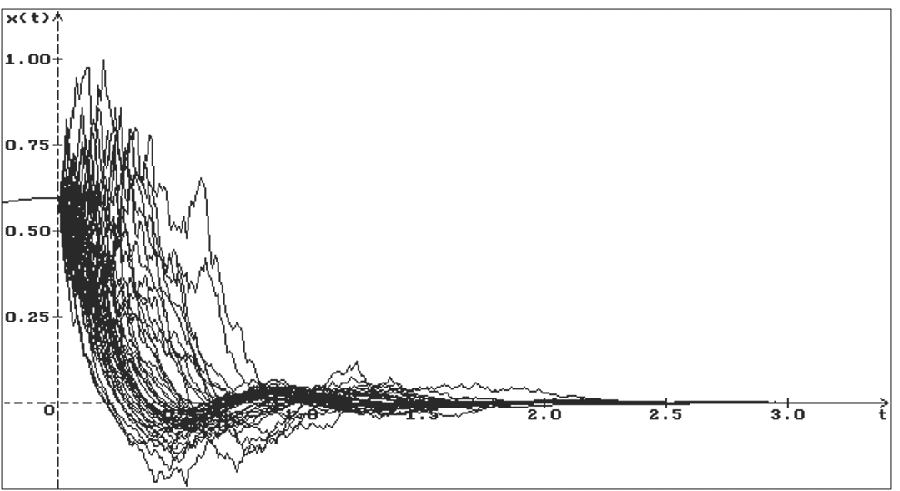}
\caption{50 trajectories of the solution of \xch{Equation}{the equation} \eqref{4.5},
$a=-2$, $b=9$, $c=1$, $h=0.5$, $p=0.55$, $\tau=0$, $x(s)=0.6\cos(s)$, $s\in[-h,0]$}
\label{Fig4}
\end{figure}

\begin{figure}[h!]
\includegraphics{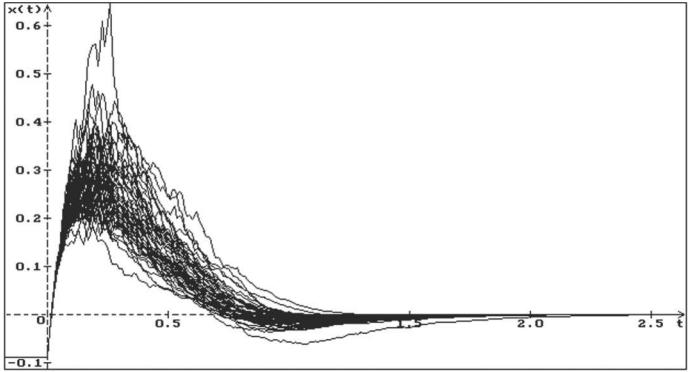}
\caption{50 trajectories of the solution of \xch{Equation}{the equation} \eqref{4.13},
$a=3$, $b=4$, $c=3$, $\mu=0.1$, $\nu=0.01$, $h=0.5$, $p=0.5$, $\tau=0$, $x(s)=-0.09\cos
(s)$, $s\in[-h,0]$}
\label{Fig5}
\end{figure}
\begin{figure}[h!]
\includegraphics{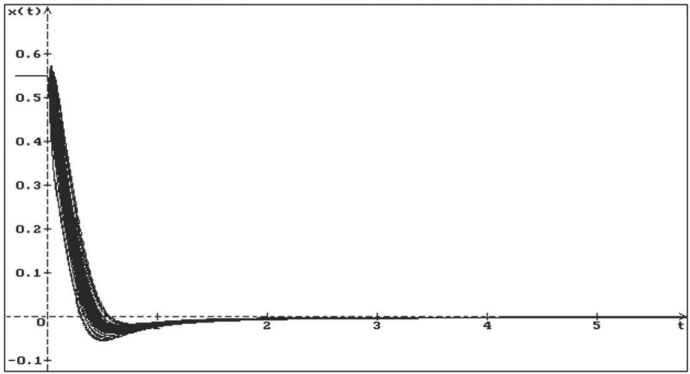}
\caption{50 trajectories of the solution of \xch{Equation}{the equation} \eqref{4.13},
$a=0$, $b=8.5$, $c=1$, $\mu=0.008$, $\nu=0.15$, $h=0.3$, $p=0.2$, $\tau=0$, $x(s)=0.55$, $s\in[-h,0]$}
\label{Fig6}
\end{figure}

Via $\mu\le2\nu$, the supremum is reached at %for
$t = 0$, so, we obtain
$\frac{b}{\mu^2}(\cosh(\mu h)-1)+\frac{p}{bh}e^{-2\nu\tau}<1$ or
\begin{equation}
\label{C2} p<bh \biggl(1-b\frac{\cosh(\mu h)-1}{\mu^2} \biggr)e^{2\nu\tau}, \quad
a=0,\quad\mu\le2\nu.
\end{equation}
So, if one of \xch{Conditions}{the conditions} \eqref{C1}, \eqref{C2} holds then the zero
solution of \xch{Equation}{the equation} \eqref{4.13} is stable in probability.

Note that for $\mu=\nu=0$ %from
\xch{Conditions}{the conditions} \eqref{C1}, \eqref{C2}
imply,
%follow
respectively, two known stability conditions for stochastic
differential equations with constant coefficients $a>|b|h+p$ and
$bh (1-\frac{bh^2}{2} )>p$ (\cite{Sh13}, p.~169).

In Fig.~\ref{Fig5}, 50 trajectories of the solution of \xch{Equation}{the equation} \eqref{4.13}
are shown for $a=3$, $b=4$, $c=3$, $\mu=0.1$, $\nu=0.01$, $h=0.5$,
$p=0.5$, $\tau=0$ and the initial function $x(s)=-0.09\cos(s)$, $s\in
[-h,0]$. The stability condition \eqref{C1} holds, thus all
trajectories converge to zero.
In Fig.~\ref{Fig6}, 50 trajectories of the solution of \xch{Equation}{the equation} \eqref{4.13}
are shown for $a=0$, $b=8.5$, $c=1$, $\mu=0.008$, $\nu=0.15$, $h=0.3$,
$p=0.2$, $\tau=0$ and the initial function $x(s)=0.55$, $s\in[-h,0]$.
The stability condition \eqref{C2} holds and all trajectories converge
to zero.
\end{example}

\section{Conclusions}

In this paper, a nonlinear stochastic non-autonomous differential
equation with discrete and distributed delays and the order of
nonlinearity higher than one is considered. It is shown that
investigation of stability in probability of the nonlinear equation of
such type can be reduced to investigation of exponential mean square
stability of the linear part of the considered equation. A general
multi-condition for stability in probability of the zero solution of
the considered equation is obtained which allows in applications to get
at once a set of different complementary sufficient stability
conditions. Some of these conditions can be infeasible, from some of
these conditions can follow some other conditions, the remaining
conditions will complement %of
each other. The idea of construction of
this multi-condition of stability can be used %and
also for systems of
nonlinear stochastic differential equations of such type.

%\bibliography{bib/biblio}

%\begin{appendix}
%\end{appendix}

%\begin{acknowledgement}%[title={Acknowledgments}]
%\end{acknowledgement}

%\begin{funding}
%\gsponsor[id=,sponsor-id=]{}
%\gnumber[refid=]{}
%\end{funding}

%\begin{thebibliography}{}
%\end{thebibliography}
\end{document}